\newtheorem{theorem}{Theorem}
\newtheorem{claim}{Claim}
\title{On the extremal graphs for degenerate subsets,\\
dynamic monopolies, and partial incentives}
\author{S. Ehard \and  D. Rautenbach}
\date{}
\begin{document}


\maketitle

\begin{center}
Institut f\"{u}r Optimierung und Operations Research,
Universit\"{a}t Ulm, Ulm, Germany,
\{\texttt{stefan.ehard, dieter.rautenbach}\}\texttt{@uni-ulm.de}\\[3mm]
\end{center}

\begin{abstract}
The famous lower bound $\alpha(G)\geq \sum_{u\in V(G)}\frac{1}{d_G(u)+1}$ 
on the independence number $\alpha(G)$ of a graph $G$
due to Caro and Wei 
is known to be tight if and only if the components of $G$ are cliques,
and has been generalized several times
in the context of large degenerate subsets and small dynamic monopolies.
We characterize the extremal graphs for a generalization 
due to Ackerman, Ben-Zwi, and Wolfovitz.
Furthermore, we give a simple proof of a related bound 
concerning partial incentives due to 
Cordasco, Gargano, Rescigno, and Vaccaro,
and also characterize the corresponding extremal graphs.
\end{abstract}
{\small
\begin{tabular}{lp{12.5cm}}
\textbf{Keywords:} & 
independent set; 
degenerate set;
dynamic monopoly;
target set;
partial incentives
\end{tabular}
}

\section{Introduction}

We consider finite, simple, and undirected graphs, and use standard terminology.
Throughout this paper, let $G$ be a graph,
and let $c:V(G)\to \mathbb{R}_{>0}$ and $\kappa:V(G)\to \mathbb{Z}$ 
be two functions with $0\leq \kappa(u)\leq d_G(u)$ for every vertex $u$ of $G$,
where $V(G)$ denotes the vertex set of $G$, and $d_G(u)$ denotes the degree of a vertex $u$ in $G$.
For a set $I$ of vertices of $G$,
let the {\it $c$-weight} of $I$ be $c(I)=\sum_{u\in I}c(u)$.
The set $I$ is {\it $\kappa$-degenerate} in $G$
if there is a linear ordering $u_1,\ldots,u_k$ of the vertices in $I$
such that $u_i$ has at most $\kappa(u_i)$ neighbors in $\{ u_j:j\in [i-1]\}$ for every $i\in [k]$,
where $[n]$ denotes the set of positive integers at most $n$ for every integer $n$.
Note that a set of vertices of $G$ is {\it independent} exactly if it is $0$-degenerate.
Therefore, if $\alpha(G,c,\kappa)$ denotes the maximum $c$-weight of a $\kappa$-degenerate set of vertices of $G$,
then $\alpha(G,1,0)$ is the well-known {\it independence number} $\alpha(G)$ of $G$.

For every graph $G$, Caro \cite{ca} and Wei \cite{we} showed
\begin{eqnarray}\label{ecw}
\alpha(G) & = & \alpha(G,1,0) \geq \sum\limits_{u\in V(G)}\frac{1}{d_G(u)+1}.
\end{eqnarray}
For a fixed non-negative integer $d$,
Alon, Kahn, and Seymour \cite{alkase} extended (\ref{ecw}) by showing
\begin{eqnarray}\label{eaks}
\alpha(G,1,d) & \geq & \sum\limits_{u\in V(G)}\frac{\min\{ d_G(u),d\}+1}{d_G(u)+1}.
\end{eqnarray}
The dual notion of a degenerate set of vertices is the notion of a dynamic monopoly or target set \cite{ch,acbewo,re}.
More precisely, given $G$ and $\kappa$ as above, 
if the function $\tau:V(G)\to\mathbb{Z}$ is such that $d_G(u)=\tau(u)+\kappa(u)$ for every vertex $u$ of $G$,
then a set $I$ of vertices of $G$ is $\kappa$-degenerate in $G$
if and only if $V(G)\setminus I$ is a {\it dynamic monopoly} or {\it target set} in $G$ with {\it threshold function} $\tau$.
This duality generalizes the well-known duality
between independent sets and vertex covers.
Using this duality, the following generalization of (\ref{ecw}) and (\ref{eaks}) 
is an equivalent formulation of a result due to Ackerman, Ben-Zwi, and Wolfovitz \cite{acbewo} (cf.~also Reichman \cite{re}).
\begin{eqnarray}\label{eabw}
\alpha(G,1,\kappa) & \geq & \sum\limits_{u\in V(G)}\frac{\kappa(u)+1}{d_G(u)+1}.
\end{eqnarray}
Cordasco, Gargano, Rescigno, and Vaccaro \cite{cogareva} gave an algorithmic proof of the following weighted extension of (\ref{eabw}).
\begin{eqnarray}\label{ecgrv}
\alpha(G,c,\kappa) & \geq & \sum\limits_{u\in V(G)}\frac{c(u)(\kappa(u)+1)}{d_G(u)+1}.
\end{eqnarray}
The simple probabilitstic proofs \cite{alsp}
known for (\ref{ecw}) and (\ref{eabw}) 
also work for (\ref{ecgrv}).
In fact, if $u_1,\ldots,u_n$ 
is a linear ordering of the vertices of $G$
chosen uniformly at random, then 
$$I=\left\{ u_i:i\in [n]\mbox{ and }\Big| N_G(u_i)\cap \{ u_j:j\in [i-1]\}\Big|\leq \kappa(u_i)\right\}$$
is $\kappa$-degenerate, 
and the right hand side of (\ref{ecgrv}) equals $\mathbb{E}[c(I)]$,
that is, the first moment method implies (\ref{ecgrv}).

Motivated by a scenario involving partial incentives,
Cordasco et al.~\cite{cogareva} consider --- the equivalent dual of ---
the following problem for $G$ and $\kappa$ as above:
\begin{eqnarray}\label{epi}
\beta(G,1,\kappa):=\min\Big\{
\iota(V(G)):\mbox{$V(G)$ is $(\kappa+\iota)$-degenerate for }
\iota:V(G)\to\mathbb{Z}_{\geq 0}\Big\},
\end{eqnarray}
that is, the minimum total pointwise increase $\iota(V(G))=\sum_{u\in V(G)}\iota(u)$ 
of the function $\kappa$ such that the entire vertex set of 
$G$ becomes $\kappa'$-degenerate for the new function $\kappa'=\kappa+\iota$,
or, equivalently, the empty set becomes a dynamic monopoly 
for the threshold function $d_G-\kappa'$.
A natural weighted version of (\ref{epi}) is
\begin{eqnarray}\label{ewpi}
\beta(G,c,\kappa):=\min\left\{
\sum\limits_{u\in V(G)}c(u)\iota(u):\mbox{$V(G)$ is $(\kappa+\iota)$-degenerate for }
\iota:V(G)\to\mathbb{Z}_{\geq 0}\right\}.
\end{eqnarray}
For $G$ and $\kappa$ as above,
Cordasco et al.~\cite{cogareva} gave an involved algorithmic proof of the following inequality (cf.~Theorem 6 in \cite{cogareva}).
\begin{eqnarray}\label{epi2}
\beta(G,1,\kappa) & \leq & 
\sum\limits_{u\in V(G)}
\frac{\Big(d_G(u)-\kappa(u)\Big)\Big(d_G(u)-\kappa(u)+1\Big)}{2(d_G(u)+1)}.
\end{eqnarray}
Our first contribution is a simple probabilistic proof 
of a weighted generalization of (\ref{epi2}). 
\begin{theorem}\label{theorem0}
If $G$ is a graph, 
and $c:V(G)\to \mathbb{R}_{>0}$ and $\kappa:V(G)\to \mathbb{Z}$
are such that $0\leq \kappa(u)\leq d_G(u)$ for every vertex $u$ of $G$,
then 
\begin{eqnarray}\label{ewpi2}
\beta(G,c,\kappa) & \leq & 
\sum\limits_{u\in V(G)}
\frac{c(u)\Big(d_G(u)-\kappa(u)\Big)\Big(d_G(u)-\kappa(u)+1\Big)}{2(d_G(u)+1)}.
\end{eqnarray}
\end{theorem}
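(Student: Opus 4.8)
The plan is to adapt the first-moment argument that is sketched above for (\ref{ecgrv}) to the minimization problem (\ref{ewpi}). I would draw a linear ordering $u_1,\ldots,u_n$ of the vertices of $G$ uniformly at random and, for each $i\in[n]$, record the number of back-neighbors
\[
b_i=\Big|N_G(u_i)\cap\{u_j:j\in[i-1]\}\Big|.
\]
The key observation is that the function $\iota:V(G)\to\mathbb{Z}_{\geq 0}$ defined by $\iota(u_i)=\max\{0,b_i-\kappa(u_i)\}$ is feasible for (\ref{ewpi}): along the very ordering that was drawn, each vertex $u_i$ has exactly $b_i\leq\max\{\kappa(u_i),b_i\}=\kappa(u_i)+\iota(u_i)$ neighbors among its predecessors, so that $V(G)$ is $(\kappa+\iota)$-degenerate by construction. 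Hence, for every outcome of the random ordering,
\[
\beta(G,c,\kappa)\leq\sum_{u\in V(G)}c(u)\iota(u)=\sum_{i\in[n]}c(u_i)\max\{0,b_i-\kappa(u_i)\}.
\]

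Taking expectations and using linearity, the first moment method then yields
\[
\beta(G,c,\kappa)\leq\mathbb{E}\Bigg[\sum_{u\in V(G)}c(u)\iota(u)\Bigg]=\sum_{u\in V(G)}c(u)\,\mathbb{E}\big[\max\{0,b_u-\kappa(u)\}\big],
\]
where $b_u$ denotes the number of back-neighbors of $u$. It then remains to evaluate this expectation for a fixed vertex $u$ of degree $d:=d_G(u)$. Restricting attention to the relative order of $u$ and its $d$ neighbors, the rank of $u$ among these $d+1$ vertices is uniform, so $b_u$ is uniformly distributed on $\{0,1,\ldots,d\}$.

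Consequently, writing $\kappa:=\kappa(u)$, I would compute
\[
\mathbb{E}\big[\max\{0,b_u-\kappa\}\big]=\frac{1}{d+1}\sum_{j=\kappa+1}^{d}(j-\kappa)=\frac{(d-\kappa)(d-\kappa+1)}{2(d+1)}
\]
via the elementary sum $1+2+\cdots+(d-\kappa)$, and substituting this back gives precisely the right-hand side of (\ref{ewpi2}). I do not expect a genuine obstacle here: once the correct $\iota$ is chosen, everything reduces to the distribution of $b_u$ and a routine summation. The only points needing care are the verification that this $\iota$ certifies $(\kappa+\iota)$-degeneracy along the same random ordering used to define the $b_i$, which is immediate from the construction, and the check that $\iota$ is integer- and non-negative-valued, which holds since $b_i$ and $\kappa(u_i)$ are integers with $\iota(u_i)\geq 0$ by definition.
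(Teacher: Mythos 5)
Your proposal is correct and follows essentially the same route as the paper's proof: a uniformly random linear ordering, the same choice of $\iota(u_i)=\max\{0,b_i-\kappa(u_i)\}$ certifying $(\kappa+\iota)$-degeneracy along that very ordering, the observation that $b_u$ is uniform on $\{0,\ldots,d_G(u)\}$, and the identical summation yielding $\frac{1}{d+1}\binom{d-\kappa+1}{2}$. No differences worth noting.
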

\begin{proof}
If $u_1,\ldots,u_n$ is a linear ordering of the vertices of $G$
chosen uniformly at random, 
and
$$\iota(u_i)=\max\Big\{0,\big| N_G(u_i)\cap \{ u_j:j\in [i-1]\}\big|-\kappa(u_i)\Big\},$$ 
then $\iota(u_i)\in \mathbb{Z}_{\geq 0}$ for every $i\in [n]$, and
$V(G)$ is $\kappa'$-degenerate for $\kappa'=\kappa+\iota$.

Since 
\begin{eqnarray*}
\mathbb{E}[\iota(u_i)] 
& = & \sum\limits_{\ell=0}^{d_G(u_i)}
\mathbb{P}\Big[\big| N_G(u_i)\cap \{ u_j:j\in [i-1]\}\big|=\ell\Big]
\cdot\max\Big\{0,\ell-\kappa(u_i)\Big\}\\
& = & \sum\limits_{\ell=0}^{d_G(u_i)}
\frac{1}{d_G(u_i)+1}
\cdot\max\Big\{0,\ell-\kappa(u_i)\Big\}\\
& = & \sum\limits_{\ell=1}^{d_G(u_i)-\kappa(u_i)}
\frac{\ell}{d_G(u_i)+1}\\
& = & \frac{1}{d_G(u_i)+1}{d_G(u_i)-\kappa(u_i)+1\choose 2}, 
\end{eqnarray*}
we obtain, by linearity of expectation,
\begin{eqnarray*}
\beta(G,c,\kappa) & \leq & \mathbb{E}\left[\sum\limits_{i\in [n]}c(u_i)\cdot \iota(u_i)\right]\\
&=&\sum\limits_{i\in [n]}c(u_i)\cdot \mathbb{E}[\iota(u_i)]\\
&\leq &\sum\limits_{i\in [n]}
\frac{c(u_i)}{d_G(u_i)+1}{d_G(u_i)-\kappa(u_i)+1\choose 2},
\end{eqnarray*}
which completes the proof.
\end{proof}
As our main contribution
we characterize the extremal graphs for (\ref{ecgrv}) and (\ref{ewpi2}).
Our results generalize the well-known fact that (\ref{ecw}) is achieved with 
equality if and only if $G$ is the disjoint union of cliques.

\section{Extremal graphs for (\ref{ecgrv}) and (\ref{ewpi2})}

While probabilistic arguments lead to simple and short proofs for (\ref{ecgrv}) and (\ref{ewpi2}), the extremal graphs can more easily be extracted 
from proofs mimicking greedy algorithms.
Since (\ref{ecgrv}) and (\ref{ewpi2}) are both linear with respect to the components,
it suffices to characterize the connected extremal graphs.

\begin{theorem}\label{theorem1}
If $G$ is a connected graph, 
and $c:V(G)\to \mathbb{R}_{>0}$ and $\kappa:V(G)\to \mathbb{Z}$ 
are such that $0\leq \kappa(u)\leq d_G(u)$ for every vertex $u$ of $G$,
then (\ref{ecgrv}) holds with equality if and only if 
\begin{enumerate}[(i)]
\item either $\kappa(u)=d_G(u)$ for every vertex $u$ of $G$,
\item or 
$G$ is a clique, 
and $c$ and $\kappa$ are constant on $V(G)$.
\end{enumerate}
\end{theorem}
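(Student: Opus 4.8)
The plan is to sidestep the inductive route hinted at before the theorem and to exploit the probabilistic proof of (\ref{ecgrv}) directly. The starting observation is that for \emph{every} linear ordering $u_1,\ldots,u_n$ of $V(G)$ the set $I$ from the proof of (\ref{ecgrv}) is $\kappa$-degenerate, so $c(I)\le\alpha(G,c,\kappa)$, while the right hand side of (\ref{ecgrv}) is exactly the expectation $\mathbb{E}[c(I)]$ over a uniformly random ordering. Since each of the $n!$ values $c(I)$ lies below $\alpha(G,c,\kappa)$ and their average equals it, (\ref{ecgrv}) holds with equality if and only if $c(I)=\alpha(G,c,\kappa)$ for \emph{every} ordering; in particular $c(I)$ is then \emph{constant} across all orderings. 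First I would dispose of sufficiency by direct computation: if $\kappa\equiv d_G$ then every set is $\kappa$-degenerate and both sides equal $c(V(G))$, while on a clique with $c\equiv c_0$ and $\kappa\equiv k$ both sides equal $c_0(k+1)$.

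The core of the argument is to convert the constancy of $c(I)$ into local conditions by comparing orderings that differ by one adjacent transposition. Swapping two consecutive vertices alters $I$ only if they are adjacent in $G$, say $x$ in position $i$ and $y$ in position $i+1$ with prefix $P=\{u_1,\ldots,u_{i-1}\}$; writing $a=|N_G(x)\cap P|$ and $b=|N_G(y)\cap P|$, the swap raises the number of earlier neighbours of $x$ by one and lowers that of $y$ by one, so the weight of $I$ changes by $-c(x)\mathbf{1}[a=\kappa(x)]+c(y)\mathbf{1}[b=\kappa(y)]$. As $c>0$, forcing this to vanish for all orderings yields, for every edge $xy$ and every $P\subseteq V(G)\setminus\{x,y\}$,
\[\mathbf{1}\big[|N_G(x)\cap P|=\kappa(x)\big]=\mathbf{1}\big[|N_G(y)\cap P|=\kappa(y)\big],\]
together with $c(x)=c(y)$ whenever the common value is $1$.

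From this I would split off the two cases. If $\kappa(x)=d_G(x)$ at some endpoint, then $|N_G(x)\cap P|=\kappa(x)$ is unattainable, which forces $\kappa(y)=d_G(y)$; hence no edge joins $\{u:\kappa(u)=d_G(u)\}$ to $\{u:\kappa(u)<d_G(u)\}$, and by connectivity one of these sets is empty. The first possibility is case (i). Otherwise $\kappa(u)<d_G(u)$ for all $u$, and I would fix an edge $xy$ and count its common and private neighbours $p,q,r$. Choosing $P$ with $|N_G(x)\cap P|=\kappa(x)$ while letting $|N_G(y)\cap P|$ run over its attainable values shows that this set of values must collapse to the single value $\kappa(y)$; an extremal computation then forces $q=r=0$ and $\kappa(x)=\kappa(y)$. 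Thus $N_G[x]=N_G[y]$ for every edge, so by connectivity all closed neighbourhoods coincide and $G$ is a clique, and $\kappa$, being constant along edges, is constant. Finally, a prefix realising $|N_G(x)\cap P|=\kappa(x)$ activates the condition $c(x)=c(y)$ on each edge, so $c$ is constant too, placing us in case (ii).

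The main obstacle is the extremal computation of the last paragraph: one must check that the attainable values of $|N_G(y)\cap P|$ subject to $|N_G(x)\cap P|=\kappa(x)$ form an integer interval, and that its two endpoints can coincide only when $x$ and $y$ have no private neighbours, using $\kappa(x)\le d_G(x)-1$ both to keep the constraint non-vacuous and to rule out private neighbours via the condition $\kappa(x)\le p+q$. Applying this symmetrically at both endpoints is what simultaneously yields $q=r=0$ and the equality $\kappa(x)=\kappa(y)$, after which the passage to a clique and to constant data is routine.
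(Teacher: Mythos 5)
Your argument is correct, and it takes a genuinely different route from the paper. The paper deletes a carefully chosen vertex together with its neighbours of $\kappa$-value $0$, derives a local identity (its Claim~1) by a double-counting/averaging argument, shows the reduced triple is again extremal, and finishes with a harmonic-function argument plus a minimal counterexample. You instead exploit the equality condition in the first-moment method: since $c(I)\le\alpha(G,c,\kappa)$ for every ordering and the average equals the right hand side of (\ref{ecgrv}), equality forces $c(I)$ to be constant over all $n!$ orderings, and comparing adjacent transpositions turns this into the pointwise condition $c(x)\mathbf{1}[|N_G(x)\cap P|=\kappa(x)]=c(y)\mathbf{1}[|N_G(y)\cap P|=\kappa(y)]$ for every edge $xy$ and every $P\subseteq V(G)\setminus\{x,y\}$. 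I checked the step you flag as the main obstacle: with $p,q,r$ denoting common and private neighbours, the attainable values of $|N_G(y)\cap P|$ subject to $|N_G(x)\cap P|=\kappa(x)$ form the integer interval $\left[\max\{0,\kappa(x)-q\},\,\min\{p,\kappa(x)\}+r\right]$, whose length is $\big(\min\{p,\kappa(x)\}-\max\{0,\kappa(x)-q\}\big)+r$ with both summands nonnegative; collapsing it to a point gives $r=0$, the symmetric constraint gives $q=0$, and then the single value $\min\{p,\kappa(x)\}=\kappa(x)$ (using $\kappa(x)\le d_G(x)-1=p$) must equal $\kappa(y)$, so $N_G[x]=N_G[y]$ and $\kappa(x)=\kappa(y)$ on every edge, whence the clique and constancy conclusions follow by connectivity. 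Your exchange argument is arguably more elementary and self-contained (no induction on extremal triples, no harmonic-function step), and it localizes the equality condition very transparently; what the paper's greedy/deletion framework buys is a template that transfers almost verbatim to the partial-incentive bound (\ref{ewpi2}) in Theorem~\ref{theorem2}, where the extremal structure is more delicate. Two minor points to add when writing this up: dispose of the edgeless case $n=1$ explicitly (it falls under (i)), and state clearly that every pair consisting of an edge $xy$ and a set $P\subseteq V(G)\setminus\{x,y\}$ is realized as the prefix of some adjacent transposition.
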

\begin{proof} 
Let $f(G,c,\kappa)$ denote the right hand side of (\ref{ecgrv}).
Clearly, if (i) or (ii) hold, then (\ref{ecgrv}) holds with equality.
We call a triple $(G,c,\kappa)$ {\it extremal} 
if $\alpha(G,c,\kappa)=f(G,c,\kappa)$.
Now, let $(G,c,\kappa)$ be extremal.
Note that we do not yet assume that $G$ is connected.

\begin{claim}\label{claim1}
For every vertex $u$ of $G$,
$$c(u)=\frac{c(u)(\kappa(u)+1)}{d_G(u)+1}
+\sum\limits_{v\in N_G(u)}
\frac{c(v)(\kappa(v)+1)}{d_G(v)+1}
-\sum\limits_{v\in N_G(u)}
\frac{c(v)\kappa(v)}{d_G(v)}.$$
\end{claim}
\begin{proof}[Proof of Claim \ref{claim1}]
Since, for every function $h:V(G)\to \mathbb{R}$,
we have
\begin{eqnarray}\label{eh}
\sum\limits_{u\in V(G)}\sum\limits_{v\in N_G(u)}h(v)
=\sum\limits_{u\in V(G)}h(u)d_G(u),
\end{eqnarray}
we obtain
\begin{eqnarray*}
&&\sum\limits_{u\in V(G)}\left(
-c(u)+\frac{c(u)(\kappa(u)+1)}{d_G(u)+1}
+\sum\limits_{v\in N_G(u)}
\frac{c(v)(\kappa(v)+1)}{d_G(v)+1}
-\sum\limits_{v\in N_G(u)}
\frac{c(v)\kappa(v)}{d_G(v)}
\right)\\
& = & \sum\limits_{u\in V(G)}\left(
-c(u)+\frac{c(u)(\kappa(u)+1)}{d_G(u)+1}
+\sum\limits_{v\in N_G(u)}
\frac{c(v)(d_G(v)-\kappa(v))}{d_G(v)(d_G(v)+1)}
\right)\\
& \stackrel{(\ref{eh})}{=} & \sum\limits_{u\in V(G)}
\underbrace{\left(
-c(u)+\frac{c(u)(\kappa(u)+1)}{d_G(u)+1}
+\frac{c(u)(d_G(u)-\kappa(u))}{(d_G(u)+1)}
\right)}_{=0}\\
& = & 0.
\end{eqnarray*}
Hence, if the statement of the claim does not hold,
then there is a vertex $u'$ of $G$ with 
\begin{eqnarray}
c(u')>\frac{c(u')(\kappa(u')+1)}{d_G(u')+1}
+\sum\limits_{v\in N_G(u')}
\frac{c(v)(\kappa(v)+1)}{d_G(v)+1}
-\sum\limits_{v\in N_G(u')}
\frac{c(v)\kappa(v)}{d_G(v)}.\label{eb0}
\end{eqnarray}
Let 
\begin{eqnarray}
N' & = & \big\{ v\in N_G(u'):\kappa(v)=0\big\},\nonumber\\
V' & = & V(G)\setminus \big(\{ u'\}\cup N'\big),\nonumber\\
G' & = & G\left[V'\right],\label{eb3}\\
c' & = & c\mid_{V'}\mbox{, and}\nonumber\\
\kappa' &:& V'\to \mathbb{Z}:
v\mapsto 
\begin{cases}
\kappa(v)-1 &\mbox{, if $v\in V'\cap N_G(u')$, and}\\
\kappa(v) & \mbox{, if $v\in V'\setminus N_G(u')$.}
\end{cases}\nonumber
\end{eqnarray}
Note that 
$$\frac{c(v)\kappa(v)}{d_G(v)}=
\begin{cases}
\frac{c(v)(\kappa'(v)+1)}{d_{G'}(v)+1}&\mbox{, if $v\in V'\cap N_G(u')$, and}\\
0&\mbox{, if $v\in N'$.}
\end{cases}$$
By construction, $0\leq \kappa'(u)\leq d_{G'}(u)$ for every vertex $u$ of $G'$, 
and adding $u'$ to a $\kappa'$-degenerate set of vertices of $G'$
yields a $\kappa$-degenerate set of vertices of $G$.
This implies the contradiction
\begin{eqnarray}
\alpha(G,c,\kappa) 
& \geq & \alpha(G',c',\kappa')+c(u')\nonumber\\
& \stackrel{(\ref{ecgrv})}{\geq} & f(G',c',\kappa')+c(u')\nonumber\\
& \stackrel{(\ref{eb0})}{>} &
f(G',c',\kappa')+\frac{c(u')(\kappa(u')+1)}{d_G(u')+1}
+\sum\limits_{v\in N_G(u')}
\frac{c(v)(\kappa(v)+1)}{d_G(v)+1}
-\sum\limits_{v\in N_G(u')}
\frac{c(v)\kappa(v)}{d_G(v)}\nonumber\\
& \geq & f(G,c,\kappa),\label{e2}
\end{eqnarray}
which completes the proof of the claim.

Note that if the final inequality (\ref{e2}) in the above inequality chain 
holds with equality,
then $G$ contains no edges between $N'$ and $V'$.
In fact, if some vertex $w$ in $V'\setminus N_G(u')$ has a neighbor in $N'$,
then the contribution $\frac{c(v)(\kappa(v)+1)}{d_{G'}(v)+1}$ 
of $v$ to $f(G',c',\kappa')$
is larger than its contribution $\frac{c(v)(\kappa(v)+1)}{d_G(v)+1}$ 
to $f(G,c,\kappa)$,
and, 
if some vertex $w$ in $V'\cap N_G(u')$ has a neighbor in $N'$,
then the contribution $\frac{c(v)\kappa(v)}{d_{G'}(v)+1}$ 
of $v$ to $f(G',c',\kappa')$
is larger than the subtracted term $\frac{c(v)\kappa(v)}{d_G(v)}$.
\end{proof}
We say that a vertex $u$ of $G$ is {\it initial}
if there is a $\kappa$-degenerate set $I$ 
of $c$-weight $\alpha(G,c,\kappa)$
such that 
there is a linear ordering $u_1,\ldots,u_k$ of the vertices in $I$
such that $u=u_1$, and
$u_i$ has at most $\kappa(u_i)$ neighbors in $\{ u_j:j\in [i-1]\}$ 
for every $i\in [k]$.
\begin{claim}\label{claim2}
Let $u'$ be any vertex of $G$,
and let $N'$, $V'$, $G'$, $c'$, and $\kappa'$ 
be as in (\ref{eb3}).

The vertex $u'$ is initial,
$(G',c',\kappa')$ is extremal,
and there are no edges between $N'$ and $V'$.
\end{claim}
\begin{proof}[Proof of Claim \ref{claim2}]
Since adding $u'$ to a $\kappa'$-degenerate set of vertices of $G'$
yields a $\kappa$-degenerate set of vertices of $G$,
we obtain
\begin{eqnarray*}
\alpha(G,c,\kappa) 
& \geq & \alpha(G',c',\kappa')+c(u')\\
& \stackrel{(\ref{ecgrv})}{\geq} & f(G',c',\kappa')+c(u')\\
& \stackrel{\rm Claim\,\,\ref{claim1}}{=} &
f(G',c',\kappa')+\frac{c(u')(\kappa(u')+1)}{d_G(u')+1}
+\sum\limits_{v\in N_G(u')}
\frac{c(v)(\kappa(v)+1)}{d_G(v)+1}
-\sum\limits_{v\in N_G(u')}
\frac{c(v)\kappa(v)}{d_G(v)}\\
& \geq & f(G,c,\kappa).
\end{eqnarray*}
Since $\alpha(G,c,\kappa)=f(G,c,\kappa)$,
equality holds throughout this inequality chain.
Since $\alpha(G,c,\kappa)=\alpha(G',c',\kappa')+c(u')$,
it follows that $u'$ is initial.
Since $\alpha(G',c',\kappa')=f(G',c',\kappa')$,
it follows that $(G',c',\kappa')$ is extremal.
As noted at the end of the proof of Claim \ref{claim1},
equality in the last inequality of the above inequality chain implies 
that there are no edges between $N'$ and $V'$.
\end{proof}

\begin{claim}\label{claim3}
If $G$ is connected, then there is some $f_0\in\mathbb{R}_{\geq 0}$
with 
\begin{eqnarray}\label{e3}
f_0=\frac{c(u)(d_G(u)-\kappa(u))}{d_G(u)(d_G(u)+1)}
\end{eqnarray}
for every vertex $u$ of $G$.
\end{claim}
\begin{proof}[Proof of Claim \ref{claim3}]
If $h(u)$ denotes the right hand side of (\ref{e3}),
then Claim \ref{claim1} implies that 
$h(u)=\frac{1}{d_G(u)}\sum_{v\in N_G(u)}h(v)$ for every vertex $u$ of $G$,
that is, the $h$-value of every vertex equals 
the average $h$-value of its neighbors.
Since $G$ is connected, it follows that $h$ is constant within $V(G)$.
\end{proof}
We have shown Claims \ref{claim1}, \ref{claim2}, and \ref{claim3}
for every extremal triple $(G,c,\kappa)$.
For the rest of the proof, we proceed by contradiction,
and assume that the extremal triple $(G,c,\kappa)$
is a counterexample to the statement of the theorem
such that the order $n$ of $G$ is minimum.
Trivially, we have $n\geq 2$.
If $f_0=0$, then $c(u)>0$ implies $\kappa(u)=d_G(u)$ 
for every vertex $u$ of $G$,
that is, (i) holds.
By the choice of $(G,c,\kappa)$, we obtain $f_0>0$, 
which implies $\kappa(u)<d_G(u)$ for every vertex $u$ of $G$.
If $n=2$, then this implies $\kappa(u)=0$ for every vertex $u$ of $G$, which, by (\ref{e3}), implies that $c$ is constant,
that is, (ii) holds.
Hence, the choice of $(G,c,\kappa)$ implies $n\geq 3$.

\begin{claim}\label{claim4}
$G$ is a clique.
\end{claim}
\begin{proof}[Proof of Claim \ref{claim4}]
Suppose, for a contradiction, that $G$ is not a clique.
This implies that $G$ has a vertex $u'$
such that $G-u'$ is connected,
and $u'$ is not universal, that is, 
$N_G(u')\not=V(G)\setminus \{ u'\}$.
Let $N'$, $V'$, $G'$, $c'$, and $\kappa'$ 
be as in (\ref{eb3}).
Since $u'$ is not universal, and $N'\subseteq N_G(u')$,
the set $V'\setminus N_G(u')$ contains a vertex $v'$,
in particular, the set $V'$ is not empty.
Since $G$ and $G-u'$ are connected, and,
by Claim \ref{claim2}, 
there are no edges between $N'$ and $V'$, 
we obtain that 
the set $N'$ is empty, 
the graph $G'$ equals $G-u'$, which is connected, and
the set $V'\cap N_G(u')$ contains a vertex $w'$.
By Claim \ref{claim2},
the triple $(G',c',\kappa')$ is extremal,
which, by Claim \ref{claim3}, implies
the existence of some $f_0'\in\mathbb{R}_{\geq 0}$
with 
\begin{eqnarray}\label{e4}
f_0'=\frac{c(u)(d_{G'}(u)-\kappa'(u))}{d_{G'}(u)(d_{G'}(u)+1)}
\end{eqnarray}
for every vertex $u$ of $G'$.
Using the definition of $G'$ and $c'$, we obtain the contradiction
\begin{eqnarray*}
f_0-f_0'
& \stackrel{(\ref{e3}),(\ref{e4})}{=} & 
\frac{c(w')(d_G(w')-\kappa(w'))}{d_G(w')(d_G(w')+1)}
-\frac{c'(w')(d_{G'}(w')-\kappa'(w'))}{d_{G'}(w')(d_{G'}(w')+1)}\\
& = & 
\frac{c(w')(d_G(w')-\kappa(w'))}{d_G(w')(d_G(w')+1)}
-\frac{c(w')\big((d_G(w')-1)-(\kappa(w')-1)\big)}{(d_G(w')-1)d_G(w')}
<0\mbox{ and}\\[3mm]
f_0-f_0'  
& \stackrel{(\ref{e3}),(\ref{e4})}{=} & 
\frac{c(v')(d_G(v')-\kappa(v'))}{d_G(v')(d_G(v')+1)}
-\frac{c'(v')(d_{G'}(v')-\kappa'(v'))}{d_{G'}(v')(d_{G'}(v')+1)}
=0,
\end{eqnarray*}
which completes the proof of the claim.
\end{proof}
We are now in a position to derive a final contradiction.
Let $u'$ be any vertex of $G$,
and let $N'$, $V'$, $G'$, $c'$, and $\kappa'$ 
be as in (\ref{eb3}).
By Claim \ref{claim2}, there are no edges between $N'$ and $V'$,
which, by Claim \ref{claim4}, implies that either $N'$ or $V'$ is empty.
If $V'$ is empty, then $\kappa$ is constant on $V(G)\setminus \{ u'\}$.
If $N'$ is empty, then
$\kappa(u)<d_G(u)$ for every vertex $u$ of $G$
implies that 
$\kappa'(u)<d_{G'}(u)$ for every vertex $u$ of $G'$.
Since $(G',c',\kappa')$ is extremal,
the choice of $(G,c,\kappa)$ implies 
that $\kappa'$ is constant on $V'$.
Since $u'$ is adjacent to all vertices in $V'$,
the definition of $\kappa'$ implies that 
$\kappa$ is constant on $V(G)\setminus \{ u'\}$.
Altogether, we obtain that 
$\kappa$ is constant on $V(G)\setminus \{ u'\}$
for every vertex $u'$ of $G$.
Since $G$ has at least $3$ vertices, 
this actually implies that 
$\kappa$ is constant on $V(G)$.
By (\ref{e3}), and Claim \ref{claim4},
it follows that also $c$ is constant on $V(G)$,
that is, (ii) holds.
This final contradiction completes the proof. 
\end{proof}
The statement and the proof of the following result 
is quite similar to the statement and the proof of Theorem \ref{theorem1}.
There are nevertheless 
several small yet subtle and important differences,
which we will point out during the proof.

\begin{theorem}\label{theorem2}
If $G$ is a connected graph, 
and $c:V(G)\to \mathbb{R}_{>0}$ and $\kappa:V(G)\to \mathbb{Z}$ 
are such that $0\leq \kappa(u)\leq d_G(u)$ for every vertex $u$ of $G$,
then (\ref{ewpi2}) holds with equality if and only if 
\begin{enumerate}[(i)]
\item either $\kappa(u)=d_G(u)$ for every vertex $u$ of $G$,
\item or $c$ is constant on $V(G)$, and $\kappa(u)=0$ for every vertex $u$ of $G$,
\item or 
$G$ is a clique, 
$c$ and $\kappa$ are constant on $V(G)$,
and $0<\kappa<d_G$.
\end{enumerate}
\end{theorem}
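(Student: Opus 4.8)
The plan is to mirror the proof of Theorem~\ref{theorem1}, replacing the independence-type quantities by their $\beta$-analogues. Write $g(G,c,\kappa)$ for the right-hand side of (\ref{ewpi2}), and set
$$h(u)=\frac{c(u)\big(d_G(u)-\kappa(u)\big)\big(d_G(u)-\kappa(u)+1\big)}{2d_G(u)\big(d_G(u)+1\big)},$$
so that the contribution of $u$ to $g(G,c,\kappa)$ equals $d_G(u)h(u)$. The greedy step dual to (\ref{eb3}) is: for a vertex $u'$ form $G'=G-u'$, keep $c'=c|_{V(G')}$, and lower the budget on high-budget neighbours, i.e.\ set $\kappa'(v)=\kappa(v)-1$ for $v\in N_G(u')$ with $\kappa(v)\geq1$ and $\kappa'(v)=\kappa(v)$ otherwise. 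Prepending $u'$ to an optimal ordering for $(G',c',\kappa')$ gives
$$\beta(G,c,\kappa)\leq\beta(G',c',\kappa')+\sum_{v\in N'}c(v),\qquad N'=\{v\in N_G(u'):\kappa(v)=0\};$$
the first subtle difference from Theorem~\ref{theorem1} is that $N'$ is \emph{not} deleted from $G'$ here, since each such neighbour merely incurs one forced unit of incentive. A direct computation then gives $g(G,c,\kappa)-g(G',c',\kappa')-\sum_{v\in N'}c(v)=d_G(u')h(u')-\sum_{v\in N_G(u')}h(v)$.

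First I would establish the analogue of Claims~\ref{claim1} and \ref{claim3}. For an extremal triple, combining the displayed recursion with (\ref{ewpi2}) applied to $G'$ yields $d_G(u')h(u')\leq\sum_{v\in N_G(u')}h(v)$ for \emph{every} vertex $u'$; since the identity (\ref{eh}) forces $\sum_u d_G(u)h(u)=\sum_u\sum_{v\in N_G(u)}h(v)$, all these inequalities are equalities, so $h$ equals the average of its neighbours and hence is constant, say $h\equiv f_0\geq0$, whenever $G$ is connected. Running the equality chain backwards also shows $(G',c',\kappa')$ is again extremal. If $f_0=0$ then $\kappa\equiv d_G$ and (i) holds; if $f_0>0$ then $\kappa<d_G$ everywhere, and $h\equiv f_0$ gives $c\equiv 2f_0$ on $Z:=\{u:\kappa(u)=0\}$, so $\kappa\equiv0$ already forces $c$ constant and puts us in case (ii).

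The remaining, genuinely new, case is $f_0>0$ together with $S:=\{u:\kappa(u)\geq1\}\neq\emptyset$, where I aim for (iii); I argue by taking a minimal counterexample, which then has $n\geq3$. For a non-cut vertex $u'$, the triple $(G'=G-u',c',\kappa')$ is extremal, connected and smaller, hence satisfies (i)--(iii) by minimality, and $\kappa'<d_{G'}$ rules out (i), so $h_{G'}$ is a constant $f_0'$. Comparing $f_0'=h_{G'}(v')=f_0$ at a non-neighbour $v'$ of $u'$ with $f_0'=h_{G'}(w')=\tfrac{d_G(w')+1}{d_G(w')-1}f_0>f_0$ at a neighbour $w'\in S$ (note $d_G(w')\geq2$) shows these cannot coexist: \emph{every non-cut $u'$ is universal or satisfies $N_G(u')\subseteq Z$}. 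Moreover, if a non-cut $u'$ has $N_G(u')\subseteq Z$ then $\kappa'=\kappa|_{V(G')}$, so $G'$ cannot be the clique case (iii) (that would force $N_G(u')\subseteq S$, contradicting $N_G(u')\subseteq Z$); hence $G'$ is case (ii) and $S=\{u'\}$. I expect the main obstacle to be turning this dichotomy into cliquehood while correctly excluding the absorbing case (ii). I would do so as follows: a connected graph on $n\geq2$ vertices has at least two non-cut vertices, and any universal vertex $a$ makes every other vertex non-cut. If $G$ had a universal vertex but were not complete, two non-adjacent vertices distinct from $a$ would be non-cut and non-universal, each forcing $S$ to be the singleton consisting of itself --- impossible; and if $G$ had no universal vertex, every non-cut vertex would have its neighbourhood in $Z$ and again force $S$ to be its own singleton, contradicting that there are at least two non-cut vertices. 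Hence $G$ is complete.

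Finally, on $G=K_n$ with $n\geq3$, removing any vertex leaves $K_{n-1}$ in case (ii) or (iii), and in both cases $c$ is constant on $V(G)\setminus\{u'\}$; letting $u'$ vary gives $c$ constant on $V(G)$. Then $h\equiv f_0$ reduces to $(n-\kappa(u))(n-1-\kappa(u))$ being constant, and since $t\mapsto t(t-1)$ is strictly increasing on $\{t\geq1\}$ this forces $\kappa$ constant; with $0<\kappa<d_G$ this is precisely (iii), contradicting the choice of counterexample. I expect the delicate point to be exactly this interaction between the new case (ii) and the cliquehood argument --- the place where Theorem~\ref{theorem2} departs from Theorem~\ref{theorem1}, which has no analogue of (ii) --- rather than the routine computations verifying the recursion and the sufficiency of (i)--(iii).
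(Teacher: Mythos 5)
Your proof is correct, but it takes a genuinely different route from the paper's. The paper treats the deleted vertex $u'$ as \emph{terminal}: it is appended last at cost $c(u')(d_G(u')-\kappa(u'))$, the budget is decremented on the neighbours $v$ with $\kappa(v)=d_G(v)$, and the resulting harmonic invariant is $\frac{c(u)(d_G(u)-\kappa(u))(d_G(u)+\kappa(u)+1)}{2d_G(u)(d_G(u)+1)}$. You instead port the decomposition of Theorem~\ref{theorem1} essentially verbatim, treating $u'$ as \emph{initial} (prepended at zero cost, paying $c(v)$ for each neighbour with $\kappa(v)=0$), which yields the different invariant $h(u)=\frac{c(u)(d_G(u)-\kappa(u))(d_G(u)-\kappa(u)+1)}{2d_G(u)(d_G(u)+1)}$; I checked your recursion $\beta(G,c,\kappa)\le\beta(G',c',\kappa')+\sum_{v\in N'}c(v)$ and the identity $g(G,c,\kappa)-g(G',c',\kappa')-\sum_{v\in N'}c(v)=d_G(u')h(u')-\sum_{v\in N_G(u')}h(v)$, and both are right, as is the deduction that $h$ is harmonic and hence constant. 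The endgame also differs: the paper first proves $\kappa>0$ everywhere (via two deletions whose vertex sets cover $V(G)$) and only afterwards proves cliqueness by a strict jump of the constant at a neighbour, whereas you interleave the two, deriving the dichotomy that every non-cut vertex is either universal with all neighbours in $S$ or has all its neighbours in $Z$ and then forces $S$ to be the singleton of itself, and you close with the fact that a connected graph has at least two non-cut vertices. Your route buys a cleaner parallel with Theorem~\ref{theorem1} and absorbs case (ii) into the cliqueness argument; the paper's terminal-vertex decomposition buys an exact cost recursion with no $N'$-correction term and a cleaner separation of case (ii) into its own claim. The items you defer as routine (sufficiency of (i)--(iii), in particular the lower bound $\beta(G,c,\kappa)\ge c_0 m$ in case (ii)) do require the short arguments the paper supplies, but they are indeed routine.
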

\begin{proof} 
Let $g(G,c,\kappa)$ denote the right hand side of (\ref{ewpi2}).
If (i) holds, then 
$\beta(G,c,\kappa)=0=g(G,c,\kappa)$.
If (ii) holds, and $c_0$ is the value of $c$ on $V(G)$, 
then (\ref{ewpi2}) implies 
$$\beta(G,c,\kappa)\leq g(G,c,\kappa)
=\sum\limits_{i=1}^n c(u_i)\cdot \frac{d_G(u_i)}{2}=c_0\cdot m,$$
where $m$ denotes the number of edges of $G$.
Furthermore, if $\iota:V(G)\to\mathbb{Z}_{\geq 0}$ is such that 
$V(G)$ is $\kappa'$-degenerate for $\kappa'=\kappa+\iota$,
and $u_1,\ldots,u_n$ is a linear ordering of the vertices of $G$
such that $u_i$ has at most $\kappa'(u_i)$ neighbors in $\{ u_j:j\in [i-1]\}$ 
for every $i\in [n]$,
then $\kappa(u_i)=0$ implies 
$\iota(u_i)\geq |N_G(u_i)\cap \{ u_j:j\in [i-1]\}|$ for every $i\in [n]$, 
and, hence,
$$\sum\limits_{i=1}^n c(u_i)\cdot \iota(u_i)
\geq c_0\sum\limits_{i=1}^n |N_G(u_i)\cap \{ u_j:j\in [i-1]\}|
=c_0\cdot m.$$
Altogether, we obtain 
$\beta(G,c,\kappa)=m=g(G,c,\kappa)$.
Finally, if (iii) holds, $G$ has order $n$,
$c(u)=c_0$ and $\kappa(u)=\kappa_0$ for every vertex $u$ of $G$,
then $\beta(G,c,\kappa)=c_0\big(1+2+\ldots+(n-1-\kappa_0)\big)=g(G,c,\kappa)$.
Hence, if (i), (ii), or (iii) hold, then (\ref{ewpi2}) holds with equality.
We call a triple $(G,c,\kappa)$ {\it extremal} 
if $\beta(G,c,\kappa)=g(G,c,\kappa)$.
Now, let $(G,c,\kappa)$ be extremal.
Note that we do not yet assume that $G$ is connected.

\setcounter{claim}{0}

\begin{claim}\label{claim1b}
For every vertex $u$ of $G$,
\begin{eqnarray*}
c(u)(d_G(u)-\kappa(u)) & = & 
\frac{c(u)(d_G(u)-\kappa(u))(d_G(u)-\kappa(u)+1)}{2(d_G(u)+1)}\\
&& +\sum\limits_{v\in N_G(u)}
\frac{c(v)(d_G(v)-\kappa(v))(d_G(v)-\kappa(v)+1)}{2(d_G(v)+1)}\\
&& -\sum\limits_{v\in N_G(u)}
\frac{c(v)(d_G(v)-\kappa(v)-1)(d_G(v)-\kappa(v))}{2d_G(v)}.
\end{eqnarray*}
\end{claim}
\begin{proof}[Proof of Claim \ref{claim1b}]
Arguing as in the proof of Claim \ref{claim1} in the proof of Theorem \ref{theorem1},
we obtain that the sum of 
the differences of the left hand side and the right hand side 
of the expression in the statement of the claim equals $0$.
Hence, if the statement of the claim does not hold,
then there is a vertex $u'$ of $G$ with 

\begin{eqnarray}
c(u')(d_G(u')-\kappa(u')) & < & \nonumber
\frac{c(u')(d_G(u')-\kappa(u'))(d_G(u')-\kappa(u')+1)}{2(d_G(u')+1)}\\
&& +\sum\limits_{v\in N_G(u')}\label{eb0b}
\frac{c(v)(d_G(v)-\kappa(v))(d_G(v)-\kappa(v)+1)}{2(d_G(v)+1)}\\
&& -\sum\limits_{v\in N_G(u')}\nonumber
\frac{c(v)(d_G(v)-\kappa(v)-1)(d_G(v)-\kappa(v))}{2d_G(v)}.
\end{eqnarray}
Let 
\begin{eqnarray}
N' & = & \big\{ v\in N_G(u'):\kappa(v)=d_G(v)\big\},\nonumber\\
V' & = & V(G)\setminus \{ u'\},\nonumber\\
G' & = & G\left[V' \right],\label{eb3b}\\
c' & = & c\mid_{V'}\mbox{, and}\nonumber\\
\kappa' &:& V'\to \mathbb{Z}:
v\mapsto 
\begin{cases}
\kappa(v)-1 &\mbox{, if $v\in N'$, and}\\
\kappa(v) & \mbox{, if $v\in V'\setminus N'$.}
\end{cases}\nonumber
\end{eqnarray}
Note that, unlike in the proof of Theorem \ref{theorem1},
the vertices in $N'$ still belong to $G'$.
By construction, $0\leq \kappa'(u)\leq d_{G'}(u)$ for every vertex $u$ of $G'$.
This implies the contradiction
\begin{eqnarray*}
\beta(G,c,\kappa) 
& \leq & \beta(G',c',\kappa')+c(u')(d_G(u')-\kappa(u'))\\
& \stackrel{(\ref{ewpi2})}{\leq} & 
g(G',c',\kappa')+c(u')(d_G(u')-\kappa(u'))\\
& \stackrel{(\ref{eb0b})}{<} &
g(G',c',\kappa')
+\frac{c(u')(d_G(u')-\kappa(u'))(d_G(u')-\kappa(u')+1)}{2(d_G(u')+1)}\\
&& +\sum\limits_{v\in N_G(u')}
\frac{c(v)(d_G(v)-\kappa(v))(d_G(v)-\kappa(v)+1)}{2(d_G(v)+1)}\\
&& -\sum\limits_{v\in N_G(u')}
\frac{c(v)(d_G(v)-\kappa(v)-1)(d_G(v)-\kappa(v))}{2d_G(v)}\\
& = & g(G,c,\kappa),
\end{eqnarray*}
which completes the proof of the claim.

Unlike in the proof of Theorem \ref{theorem1},
the final equality within the above inequality chain always holds with equality.
In fact, 
\begin{itemize}
\item a vertex $w$ in $V'\setminus N_G(u')$ contributes exactly the same to 
$g(G',c',\kappa')$ and $g(G,c,\kappa)$,
\item a vertex $v$ in $N_G(u')\setminus N'$ 
contributes 

$\frac{c(v)(d_G(v)-\kappa(v))(d_G(v)-\kappa(v)+1)}{2(d_G(v)+1)}$
to $g(G,c,\kappa)$
and 

$\frac{c(v)(d_G(v)-\kappa(v)-1)(d_G(v)-\kappa(v))}{2d_G(v)}$
to $g(G',c',\kappa')$, 
and, 
\item a vertex $v$ in $N'$ 
contributes 

$0=\frac{c(v)(d_G(v)-\kappa(v))(d_G(v)-\kappa(v)+1)}{2(d_G(v)+1)}$
to $g(G,c,\kappa)$and 

$0=\frac{c(v)(d_G(v)-\kappa(v)-1)(d_G(v)-\kappa(v))}{2d_G(v)}$
to $g(G',c',\kappa')$.
\end{itemize}
\end{proof}
We say that a vertex $u$ of $G$ is {\it terminal}
if there is a function $\iota:V(G)\to \mathbb{Z}_{\geq 0}$
such that 
\begin{itemize}
\item $\iota(V(G))=\beta(G,c,\kappa)$,
\item $V(G)$ is $(\kappa+\iota)$-degenerate, and 
\item $d_G(u)\leq \kappa(u)+\iota(u)$.
\end{itemize}
Note that $\kappa(u)\leq d_G(u)$ and the optimality of $\iota$ imply
that $\iota(u)=d_G(u)-\kappa(u)$.
\begin{claim}\label{claim2b}
Let $u'$ be any vertex of $G$,
and let $N'$, $V'$, $G'$, $c'$, and $\kappa'$ 
be as in (\ref{eb3b}).

The vertex $u'$ is terminal, and
$(G',c',\kappa')$ is extremal.
\end{claim}
\begin{proof}[Proof of Claim \ref{claim2b}]
We obtain
\begin{eqnarray*}
\beta(G,c,\kappa) 
& \leq & \beta(G',c',\kappa')+c(u')(d_G(u')-\kappa(u'))\\
& \stackrel{(\ref{ewpi2})}{\leq} & 
g(G',c',\kappa')+c(u')(d_G(u')-\kappa(u'))\\
& \stackrel{\rm Claim\,\,\ref{claim1b}}{=} &
g(G',c',\kappa')
+\frac{c(u')(d_G(u')-\kappa(u'))(d_G(u')-\kappa(u')+1)}{2(d_G(u')+1)}\\
&& +\sum\limits_{v\in N_G(u')}
\frac{c(v)(d_G(v)-\kappa(v))(d_G(v)-\kappa(v)+1)}{2(d_G(v)+1)}\\
&& -\sum\limits_{v\in N_G(u')}
\frac{c(v)(d_G(v)-\kappa(v)-1)(d_G(v)-\kappa(v))}{2d_G(v)}\\
& = & g(G,c,\kappa),
\end{eqnarray*}
Since $\beta(G,c,\kappa)=g(G,c,\kappa)$,
equality holds throughout the above inequality chain.
Since $\beta(G,c,\kappa)=\beta(G',c',\kappa')+
c(u')(d_G(u')-\kappa(u'))$,
it follows that $u'$ is terminal.
Since $\beta(G',c',\kappa')=g(G',c',\kappa')$,
it follows that $(G',c',\kappa')$ is extremal.
\end{proof}

\begin{claim}\label{claim3b}
If $G$ is connected, then there is some $g_0\in\mathbb{R}_{\geq 0}$
with 
\begin{eqnarray}\label{e3b}
g_0=\frac{c(u)(d_G(u)-\kappa(u))(d_G(u)+\kappa(u)+1)}{2d_G(u)(d_G(u)+1)}
\end{eqnarray}
for every vertex $u$ of $G$.
\end{claim}
\begin{proof}[Proof of Claim \ref{claim3b}]
If $h(u)$ denotes the right hand side of (\ref{e3b}),
then Claim \ref{claim1} implies that 
$h(u)=\frac{1}{d_G(u)}\sum_{v\in N_G(u)}h(v)$ for every vertex $u$ of $G$,
that is, the $h$-value of every vertex equals 
the average $h$-value of its neighbors.
Since $G$ is connected, it follows that $h$ is constant within $V(G)$.
\end{proof}
We have shown Claims \ref{claim1b}, \ref{claim2b}, and \ref{claim3b}
for every extremal triple $(G,c,\kappa)$.
For the rest of the proof, we proceed by contradiction,
and assume that the extremal triple $(G,c,\kappa)$
is a counterexample to the statement of the theorem
such that the order $n$ of $G$ is minimum.
Trivially, we have $n\geq 2$.
If $g_0=0$, then $c(u)>0$ implies $\kappa(u)=d_G(u)$ 
for every vertex $u$ of $G$,
that is, (i) holds.
By the choice of $(G,c,\kappa)$, we obtain $g_0>0$, 
which implies $\kappa(u)<d_G(u)$ for every vertex $u$ of $G$.
Unlike in the proof of Theorem \ref{theorem1},
this implies that,
for any vertex $u'$ of $G$,
and $N'$, $V'$, and $\kappa'$ as in (\ref{eb3b}),
we have $N'=\emptyset$ and $\kappa'(u)=\kappa(u)$ for every vertex $u$ in $V'$.
Furthermore, if $n=2$, then this implies $\kappa(u)=0$ for every vertex $u$ of $G$, 
which, by (\ref{e3b}), implies that $c$ is constant,
that is, (iii) holds.
Hence, the choice of $(G,c,\kappa)$ implies $n\geq 3$.

In order to complete the proof using a similar approach 
as in the proof of Theorem \ref{theorem1}, 
we first need to handle the situation corresponding to (ii),
which leads to the following additional claim.

\begin{claim}\label{claim4a}
$\kappa(u)>0$ for every vertex $u$ of $G$.
\end{claim}
\begin{proof}[Proof of Claim \ref{claim4a}]
Suppose, for a contradiction, that $\kappa(x')=0$ for some vertex $x'$ of $G$.
Let $u'$ be a vertex of $G$ such that $u'$ is distinct from $x'$, and $G-u'$ is connected.
Let $N'$, $V'$, $G'$, $c'$, and $\kappa'$ be as in (\ref{eb3b}).
By Claim \ref{claim2b},
the triple $(G',c',\kappa')$ is extremal.
Since $G$ has at least $3$ vertices, we obtain $d_{G'}(x')>0$.
Since $\kappa'(x')=0$, 
the choice of $(G,c,\kappa)$ implies that $(G',c',\kappa')$ is as in (ii),
that is, $c'$ is constant on $V'$, and $\kappa'(u)=0$ for every vertex $u$ of $G'$.
Let $u''$ be a vertex of $G$ such that $u''$ is distinct from $u'$, and $G-u''$ is connected.
Let $N''$, $V''$, $G''$, $c''$, and $\kappa''$ be defined analogously as in (\ref{eb3b}).
Since $G$ has at least $3$ vertices, 
we have $\kappa''(x'')=0$ for some vertex $x''$ of $G''$.
Arguing as above, we obtain that 
$(G'',c'',\kappa'')$ is as in (ii),
that is, $c''$ is constant on $V''$, and $\kappa''(u)=0$ for every vertex $u$ of $G''$.
Since $V(G)=V'\cup V''$, 
it follows that 
$c$ is constant on $V(G)$, and $\kappa(u)=0$ for every vertex $u$ of $G$,
that is, (ii) holds.
This contradiction completes the proof of the claim.
\end{proof}
Now, we can proceed similarly as in the proof of Theorem \ref{theorem1}.

\begin{claim}\label{claim4b}
$G$ is a clique.
\end{claim}
\begin{proof}[Proof of Claim \ref{claim4b}]
Suppose, for a contradiction, that $G$ is not a clique.
This implies that $G$ has a vertex $u'$
such that $G'=G-u'$ is connected, and $u'$ is not universal.
Let $N'$, $V'$, $G'$, $c'$, and $\kappa'$ be as in (\ref{eb3b}).
Let $v'$ be a non-neighbor of $u'$ in $V'$.
Since $G$ is connected,
the vertex $u'$ has a neighbor $w'$ in $V'$.
By Claim \ref{claim2b},
the triple $(G',c',\kappa')$ is extremal,
which, by Claim \ref{claim3b}, implies
the existence of some $g_0'\in\mathbb{R}_{\geq 0}$
with 
\begin{eqnarray}\label{e3bprime}
g_0'=\frac{c(u)(d_{G'}(u)-\kappa(u))(d_{G'}(u)+\kappa(u)+1)}{2d_{G'}(u)(d_{G'}(u)+1)}
\end{eqnarray}
for every vertex $u$ of $G'$.
Now, we obtain 
\begin{eqnarray*}
g_0-g_0' & 
\stackrel{(\ref{e3b}),(\ref{e3bprime})}{=} & 
\frac{c(w')(d_G(w')-\kappa(w'))(d_G(w')+\kappa(w')+1)}{2d_G(w')(d_G(w')+1)}\\
&&-
\frac{c(w')\big((d_G(w')-1)-\kappa(w')\big)\big((d_G(w')+1)+\kappa(w')+1\big)}{2(d_G(w')-1)d_G(w')}\\
& = & 
\frac{c(w')\kappa(w')(\kappa(w')+1)}{d_G(w')(d_G(w')^2-1)}\\
& \stackrel{\rm Claim\,\,\ref{claim4a}}{>} & 0,\mbox{ and }\\[3mm]
g_0-g_0' 
&\stackrel{(\ref{e3b}),(\ref{e3bprime})}{=} & 
\frac{c(v')(d_G(v')-\kappa(v'))(d_G(v')+\kappa(v')+1)}{2d_G(v')(d_G(v')+1)}\\
&&-
\frac{c(v')(d_G(v')-\kappa(v'))(d_G(v')+\kappa(v')+1)}{2d_G(v')(d_G(v')+1)}\\
& = & 0.
\end{eqnarray*}
This contradiction completes the proof of the claim.

Note that we needed $\kappa(w')>0$,
that is, Claim \ref{claim4a}, for that contradiction.
\end{proof}
We are now in a position to derive a final contradiction.
Let $u'$ be any vertex of $G$,
and let $N'$, $V'$, $G'$, $c'$, and $\kappa'$ 
be as in (\ref{eb3b}).
By Claims \ref{claim2b} and \ref{claim4b}, 
$G'$ is complete and $(G',c',\kappa')$ is extremal.
By the choice of $(G,c,\kappa)$, 
this implies that $\kappa'$ is constant on $V'$;
regardless which of (i), (ii), or (iii) applies.
By the definition of $\kappa'$,
this implies that $\kappa$ is constant on $V(G)\setminus \{ u'\}$.
Since $u'$ was an arbitrary vertex of $G$, 
and $G$ has at least $3$ vertices,
this implies that $\kappa$ is constant on $V(G)$.
Since $g_0>0$, Claim \ref{claim3b} implies that $c$ is constant on $V(G)$,
that is, (ii) holds.
This final contradiction completes the proof. 
\end{proof}

\section{Conclusion}

There are versions of (\ref{ecgrv}) and (\ref{ewpi2})
that apply to functions $c$ and $\kappa$,
where $c$ is allowed to assume values that are less or equal to $0$,
and $\kappa$ is allowed to assume negative values.
It seems not too difficult --- yet slightly tedious ---
to extend Theorems \ref{theorem1} and \ref{theorem2}
in order to incorporate these cases.
In view of the extremal graphs,
there are several natural additional assumptions 
that one may impose on $G$ in order to improve (\ref{ecgrv}) and (\ref{ewpi2}).
In view of similar research for the independence number,
one may consider connectivity \cite{hara},  
triangle-freeness \cite{sh}, or 
local irregularity \cite{se} (cf.~\cite{hamo} for a corrected proof).

\end{document}